% -*- TeX:UK -*-
\RequirePackage{fix-cm}
\documentclass[11pt, a4paper, oneside, DIV11, final, pagebackref]{amsart}
\usepackage{fixltx2e}
\usepackage[notref,notcite]{showkeys}
\usepackage{dsfont}
\usepackage[latin1]{inputenc}
\usepackage[T1]{fontenc}
\usepackage{amssymb}
\usepackage[stretch=10]{microtype}
\usepackage{mathtools}
\usepackage[DIV11, headinclude=true]{typearea}
\usepackage[hidelinks, linktoc=all]{hyperref}

\providecommand{\href}[2]{#2}

\providecommand*{\backref}{}
\providecommand*{\backrefalt}{}
\renewcommand*{\backref}[1]{}
\renewcommand*{\backrefalt}[4]{%
	\ifcase #1 %
	\or
	  Cited page~#2.
	\else
	  Cited pages~#2.
	\fi
}

% Paired delimiters that can be used on multiline displays
\newcommand\MTkillspecial[1]{% helper macro
  \bgroup
  \catcode`\&=9
  \let\\\relax%
  \scantokens{#1}%
  \egroup
}
\newcommand\DeclarePairedDelimiterMultiline[3]{
  \DeclarePairedDelimiter{#1}{#2}{#3}
  \reDeclarePairedDelimiterInnerWrapper{#1}{star}{
    \mathopen{##1\vphantom{\MTkillspecial{##2}}\kern-\nulldelimiterspace\right.}
    ##2
    \mathclose{\left.\kern-\nulldelimiterspace\vphantom{\MTkillspecial{##2}}##3}}
}

\newcommand{\breakdots}{,\allowbreak\dotsc,\allowbreak}
\newcommand{\given}{\mid}
\newcommand{\bbun}{\mathds{1}}
\newcommand{\boF}{\mathcal{F}}

\newcommand{\boS}{\mathcal{S}}
\newcommand{\E}{\mathbb{E}}
\newcommand{\Pbb}{\mathbb{P}}

\newcommand{\N}{\mathbb{N}}

\newcommand{\dd}{\mathop{}\!\mathrm{d}}
\DeclarePairedDelimiter{\paren}{(}{)}
\DeclarePairedDelimiterMultiline{\abs}{\lvert}{\rvert}
\DeclarePairedDelimiterMultiline{\norm}{\lVert}{\rVert}
\newcommand{\st}{\::\:}

\renewcommand{\epsilon}{\varepsilon}

\renewcommand{\phi}{\varphi}
\renewcommand{\leq}{\leqslant}
\renewcommand{\geq}{\geqslant}

\newtheorem{thm}{Theorem}

\newtheorem{definition}[thm]{Definition}
\newtheorem{lem}[thm]{Lemma}

\newtheorem*{prop*}{Proposition}

\theoremstyle{definition}

\newtheorem{rmk}[thm]{Remark}

\title[Concentration inequalities for Markov chains]{Subgaussian concentration inequalities for geometrically ergodic Markov chains}
\author{J\'er\^ome Dedecker and S\'ebastien Gou\"ezel}

\address{Laboratoire MAP5 UMR CNRS 8145, Universit\'e Paris Descartes, Sorbonne Paris Cit\'e}
\email{jerome.dedecker@parisdescartes.fr}

\address{IRMAR, CNRS UMR 6625,
Universit\'e de Rennes 1, 35042 Rennes, France}
\email{sebastien.gouezel@univ-rennes1.fr}

\date{}

\begin{document}

\begin{abstract}
We prove that an irreducible aperiodic Markov chain is geometrically
ergodic if and only if any separately bounded functional of the stationary
chain satisfies an appropriate subgaussian deviation inequality from its
mean.
\end{abstract}

\maketitle

\bigskip

Let $K(x_0,\dotsc, x_{n-1})$ be a function of $n$ variables, which is
separately bounded in the following sense: there exist constants $L_i$ such
that for all $x_0,\dotsc, x_{n-1}, x'_i$,
\begin{equation}
\label{eq:separ_bounded}
  \abs{K(x_0,\dotsc,x_{i-1},x_i,x_{i+1},\dotsc, x_{n-1})- K(x_0,\dotsc,x_{i-1},x'_i,x_{i+1},\dotsc, x_{n-1})}
  \leq L_i.
\end{equation}
It is well known that, if the random variables $X_0,X_1,\dotsc$ are i.i.d.,
then $K(X_0,\dotsc, X_{n-1})$ satisfies a subgaussian concentration
inequality around its average, of the form
\begin{equation}
\label{eq_concentration}
  \Pbb( \abs{K(X_0,\dotsc, X_{n-1}) - \E K(X_0,\dotsc, X_{n-1})}>t) \leq 2 e^{-2 t^2 / \sum L_i^2},
\end{equation}
see for instance~\cite{McDiarmid}.

Such concentration inequalities have also attracted a lot of interest for
dependent random variables, due to the wealth of possible applications. For
instance, Markov chains with good mixing properties have been considered, as
well as weakly dependent sequences.

A particular instance of function $K$ is a sum $\sum f(x_i)$ (also referred
to as an additive functional). In this case, one can hope for better
estimates than~\eqref{eq_concentration}, involving for instance the
asymptotic variance instead of only $L_i$ (Bernstein-like inequalities).
However, for the case of a general functional $K$, estimates of the
form~\eqref{eq_concentration} are rather natural.

Under very strong assumptions ensuring that the dependence is uniformly small
(say, uniformly ergodic Markov chains, or $\Phi$-mixing dependent sequences),
subgaussian concentration inequalities are well known
(see~\cite{rio_concentration} for the extension of~\eqref{eq_concentration}
and~\cite{Samson} for other concentration inequalities). For additive
functionals, Lezaud~\cite[p~861]{lezaud} proved a Prohohorov-type inequality
under a spectral gap condition in ${\mathbb L}^2$, from which  a subgaussian
bound follows. However, there are very few such results under weaker
assumptions (say, geometrically ergodic Markov chains, or $\alpha$-mixing
dependent sequences), where other type of exponential bounds are more usual
(let us cite~\cite{merlevede_peligrad_rio} for $\alpha$-mixing sequences
and~\cite{adamczak_bednorz} for geometrically ergodic Markov chains; see also
the references in those two papers for a quite complete picture of the
literature). As an exception, let us mention the result of Adamczak, who
proves in~\cite{adamczak_markov} subgaussian concentration inequalities for
geometrically ergodic Markov chains under the additional assumptions that the
chain is strongly aperiodic and that the functional $K$ is invariant under
permutation of its variables.

Our goal in this note is to prove subgaussian concentration inequalities for
aperiodic geometrically ergodic Markov chains, extending the above result
of~\cite{adamczak_markov}. Such a setting has a wide range of applications,
in particular to MCMC (see for instance Section~3.2
in~\cite{adamczak_bednorz}). Our proof is mainly a reformulation in
probabilistic terms of the proof given
in~\cite{gouezel_chazottes_concentration} for dynamical systems. It is based
on a classical coupling estimate (Lemma~\ref{lem:coupling} below), but used
in an unusual way along an unusual filtration (the relationship between
coupling and concentration has already been explored
in~\cite{chazottes_redig}). Similar results can also be proved for Markov
chains that mix more slowly (for instance, if the return times to a small set
have a polynomial tail, then polynomial concentration inequalities hold). The
interested reader is referred to the
articles~\cite{gouezel_chazottes_concentration} and~\cite{gouezel_melbourne}
where such results are proved for dynamical systems: the proofs given there
can be readily adapted to Markov chains using the techniques we describe in
the current paper (the only difficulty is to prove an appropriate coupling
lemma extending Lemma~\ref{lem:coupling}). Since the main case of interest
for applications is geometrically ergodic Markov chains, and since the proof
is more transparent in this case, we only give details for this situation.

\medskip

Our results are valid for Markov chains on a general state space $\boS$, but
they are already new and interesting for countable state Markov chains. The
reader who is unfamiliar with general state space Markov chains is therefore
invited to pretend that $\boS$ is countable. We chose to present our results
for general state space firstly because of the wealth of applications, and
secondly because of a peculiarity of general state space that does not exist
for countable state space: there is a distinction between strongly aperiodic
and aperiodic chains, and several mixing results only apply in the strongly
aperiodic case (i.e., $m=1$ in Definition~\ref{defn:geom_erg} below) while
our argument always applies.

\bigskip

From this point on, we consider an irreducible aperiodic positive Markov
chain $(X_n)_{n \geq 0}$ on a general state space $\boS$, which we assume as
usual to be countably generated. We refer to the books~\cite{nummelin2}
or~\cite{meyn_tweedie} for the classical background on Markov chains on
general state spaces. Let us nevertheless recall the meaning of some of the
above terms, since it may vary slightly between sources.

First, we are given a measurable transition kernel $P$ of the chain, that is,
for any measurable set $A$ in $\boS$,
 \[
P(x, A)={\mathbb E}\left (\bbun_{X_1 \in A} \given X_0=x\right ) .
 \]
Starting from any point $x_0$, we obtain a chain $X_0=x_0, X_1, X_2,\dotsc$,
where $X_i$ is distributed according to the measure $P(X_{i-1}, \cdot)$. This
chain is irreducible, aperiodic and positive if there exists a (necessarily
unique) stationary probability measure $\pi$ such that, for all $x$, all set
$A$ with $\pi(A)>0$ and all large enough $n$ (depending on $x$ and $A$), one
has $P^n(x,A)>0$ (where $P^n$ denotes the kernel of the Markov chain at time
$n$). Other definitions of irreducibility only require this property to hold
for almost every $x$ (in this case, one can restrict to an absorbing set of
full $\pi$-measure to obtain it for all $x$ there), we follow the definition
of~\cite{meyn_tweedie}.

We will be interested in a specific class of such Markov chains, called
geometrically ergodic. There are many equivalent definitions of this class,
in terms of the properties of the return time to a nice set, or of mixing
properties. Essentially, geometrically ergodic Markov chains are those Markov
chains that mix exponentially fast, see~\cite[Chapters~15
and~16]{meyn_tweedie} for several equivalent characterizations. For instance,
they can be defined as follows~\cite[Theorem 15.0.1(ii)]{meyn_tweedie}.

\begin{definition}
\label{defn:geom_erg} An irreducible aperiodic positive Markov chain is
\emph{geometrically ergodic} if the tails of the return time to some small
set are exponential. More precisely, there exist a set $C$, an integer $m>0$,
a probability measure $\nu$, and $\delta \in (0,1)$, $\kappa>1$ such that
\begin{itemize}
\item For all $x\in C$, one has
\begin{equation}
\label{eq:small}
  P^m(x,\cdot) \geq \delta \nu.
\end{equation}
\item The return time $\tau_C$ to $C$ satisfies
\begin{equation}
\label{eq:return}
\sup_{x\in C} \E_x(\kappa^{\tau_C})<\infty.
\end{equation}
\end{itemize}
\end{definition}
A set $C$ satisfying~\eqref{eq:small} is called a \emph{small} set (there is
a related notion of \emph{petite} set, these notions coincide in irreducible
aperiodic Markov chains, see~\cite[Theorem 5.5.7]{meyn_tweedie}).

In the case of a countable state space, this property is equivalent to the
fact that the return time to some  
(or equivalently any)  point has an exponential
moment.

From Theorem~15.0.1 of~\cite{meyn_tweedie}, it follows that if a chain is
geometrically ergodic in the sense of Definition~\ref{defn:geom_erg}, then
\begin{equation}\label{eq:GE}
\norm{ P^n(x, \cdot ) - \pi } \leq V(x) \rho^n \, ,
\end{equation}
where $\norm{ \cdot }$ is the total variation norm, $\rho \in (0,1)$ and $V$
is a positive function such that the set $S_V=\{x : V(x) < \infty \}$ is
absorbing and of full measure. This property~\eqref{eq:GE} is in fact another
classical definition for geometric ergodicity: from Theorem 15.4.2
in~\cite{meyn_tweedie} (or Theorem~6.14 in~\cite{nummelin2}) it follows that
if a chain is irreducible, aperiodic, positively recurrent (so that there
exists an unique stationary distribution $\pi$) and satisfies~\eqref{eq:GE},
then there exists a small set $C$ for which~\eqref{eq:return} holds.
\bigskip

We prove the following theorem.
\begin{thm}
\label{thm:weak} Let $(X_n)$ be an irreducible aperiodic Markov chain which
is geometrically ergodic on a space $\boS$.  Let $\pi$ be its stationary
distribution. Let $C$ be a small set as in Definition~\ref{defn:geom_erg}.
There exists a constant $M_0$ (depending on $C$) with the following property.
Let $n\in \N$. Let $K(x_0,\dotsc, x_{n-1})$ be a function of $n$ variables on
$\boS^n$, which is separately bounded with constants $L_i$, as
in~\eqref{eq:separ_bounded}. Then, for all $t>0$,
\begin{equation}
\label{eq:concentration}
  \Pbb_{\pi}( \abs{K(X_0,\dotsc, X_{n-1}) - \E_\pi K(X_0,\dotsc, X_{n-1})}>t) \leq 2 e^{-M_0^{-1} t^2 / \sum L_i^2} \, ,
\end{equation}
and for all $x$ in the small set $C$,
\begin{equation}
\label{eq:concentrationC}
  \Pbb_{x}( \abs{K(X_0,\dotsc, X_{n-1}) - \E_x K(X_0,\dotsc, X_{n-1})}>t) \leq 2 e^{-M_0^{-1} t^2 / \sum L_i^2} \, .
\end{equation}
\end{thm}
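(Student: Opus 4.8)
The plan is to prove the concentration inequality via the standard Azuma--Hoeffding martingale method, but applied along a cleverly chosen filtration adapted to the coupling structure of the chain rather than the naive filtration generated by $X_0,\dotsc,X_{n-1}$. Write $K = K(X_0,\dotsc,X_{n-1})$ and consider the Doob martingale $D_i = \E(K \given \boF_i) - \E(K \given \boF_{i-1})$ for an appropriate increasing sequence of $\sigma$-algebras $\boF_i$. If we can show that each increment $D_i$ satisfies a bound of the form $\abs{D_i} \leq c_i$ almost surely (or more generally has a subgaussian conditional distribution), with $\sum c_i^2 \leq M_0 \sum L_i^2$, then the classical Azuma inequality immediately yields~\eqref{eq:concentration}. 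The whole game is therefore to control these increments, and this is where the coupling lemma (Lemma~\ref{lem:coupling}) enters.

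The key idea — this is the ``unusual filtration'' alluded to in the introduction — is that revealing $X_i$ does not merely fix one coordinate; because of the Markov property and geometric ergodicity, conditioning on the past up to time $i$ essentially determines the future of the chain up to an exponentially decaying error. Concretely, I would build a coupling of two copies of the chain, one started from the conditional law given $\boF_{i-1}$ and one given $\boF_i$, that agree after a random coupling time $T$ whose tail is controlled geometrically by~\eqref{eq:return} and~\eqref{eq:small} (this is precisely the content of Lemma~\ref{lem:coupling}). Using the separate boundedness~\eqref{eq:separ_bounded}, the difference in $K$ along the two coupled trajectories is bounded by $\sum_{j \geq i} L_j \bbun_{\text{trajectories differ at time } j}$, and after the coupling time the trajectories coincide, so only the coordinates $j$ with $i \leq j \leq i + T$ contribute. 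Taking expectations and using the exponential tail of $T$, the increment $\abs{D_i}$ is bounded (up to the multiplicative constant $M_0$ coming from the coupling) by something like $\sum_{j \geq i} \theta^{j-i} L_j$ for some $\theta < 1$, uniformly; squaring, summing over $i$, and applying the Cauchy--Schwarz / Young-type inequality $\bigl(\sum_j \theta^{j-i} L_j\bigr)^2 \leq \bigl(\sum_j \theta^{j-i}\bigr)\bigl(\sum_j \theta^{j-i} L_j^2\bigr)$ collapses the geometric weights and gives $\sum_i c_i^2 \leq M_0 \sum_j L_j^2$, as required.

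For the statement~\eqref{eq:concentrationC} with a fixed starting point $x \in C$, essentially the same argument works: the small set $C$ is exactly the set from which the coupling construction can be initialized with uniformly controlled return times, so starting from $x \in C$ the coupling time out of the initial step also has geometric tails with the same constants, and the martingale argument goes through with the same $M_0$ (after possibly enlarging it by an absolute factor). Alternatively, one can deduce~\eqref{eq:concentrationC} from~\eqref{eq:concentration} by a change of measure / regeneration argument, but doing the coupling directly from $x \in C$ is cleaner and is what the hypothesis ``$x$ in the small set $C$'' is designed for.

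The main obstacle is the correct bookkeeping of the coupling along the filtration: one must set up $\boF_i$ and the family of couplings coherently so that the increment $D_i$ genuinely only ``sees'' coordinates $j \geq i$ and so that the coupling time controlling step $i$ is measurable with respect to the right $\sigma$-algebra and has the claimed geometric tail uniformly in the conditioning. This requires carefully extending the probability space to carry the coupling randomness (the split chain / Nummelin-type construction) and verifying that Lemma~\ref{lem:coupling} applies in the conditional setting. Once the coupling lemma is in hand and the filtration is correctly chosen, the remaining estimates — geometric sums, Cauchy--Schwarz, Azuma — are routine; the subtlety is entirely in the probabilistic setup, which is why the paper emphasizes that the one real difficulty in extending to slower mixing is proving the analogue of Lemma~\ref{lem:coupling}.
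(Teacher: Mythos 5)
Your overall strategy (martingale differences controlled by a coupling estimate) is indeed the paper's, but the proposal has a genuine gap at its central point: the choice of filtration and the claimed uniformity of the coupling. You describe conditioning ``on the past up to time $i$'' and assert that the conditional laws given $\boF_{i-1}$ and $\boF_i$ can be coupled with a coupling time whose geometric tail is uniform in the conditioning. For a merely geometrically ergodic chain this is false: the coupling time from a state $x$ is controlled only by $V(x)\rho^n$ with $V$ unbounded (uniform control from every state is exactly uniform ergodicity, a strictly stronger hypothesis), and Lemma~\ref{lem:coupling} gives a uniform exponential moment only for starting points in the small set $C$. Along the natural time filtration you therefore have no uniform bound on $\abs{D_i}$, and the announced estimate $\abs{D_i}\leq M\sum_{j\geq i}\theta^{j-i}L_j$ does not hold. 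The missing idea, which the paper flags as the one non-routine step, is to use the filtration of the stopping times $\tau_i=\inf\{n\geq i \st X_n\in C\}$, i.e.\ to reveal the trajectory up to the \emph{next return to $C$}; then the two conditional laws being compared are both anchored at a point of $C$, where the coupling lemma applies uniformly.

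Even after this correction, the increments are still not almost surely bounded by deterministic constants: the bound one gets is of the form $\abs{D_i}\leq M\bigl(\sum_{k=i}^{i+\tau-1}L_k+\sum_{k\geq i+\tau}L_k\rho^{k-i-\tau}\bigr)$, where $\tau$ is the (unbounded) return time to $C$, so plain Azuma--Hoeffding with $\sum c_i^2\lesssim\sum L_i^2$ is not available, and the conditional law of $D_i$ is a priori only exponentially, not subgaussianly, tailed. The paper handles this by two further devices absent from your proposal: a reduction to the case $L_i\leq\epsilon_0$ for a small $\epsilon_0$ (truncating the large-$L_i$ coordinates at a fixed point, at a cost $\sum L_i^2/\epsilon_0$ in the exponent), and the elementary inequality $e^t\leq 1+t+t^2e^{\abs{t}}$ combined with the uniform exponential moment~\eqref{eq:return} of $\tau$ over $C$, which together give $\E(e^{D_i}\given\boF_{i-1})\leq e^{M\sum_{k\geq i}L_k^2\sigma^{k-i}}$ and hence the subgaussian bound after optimizing in the Chernoff parameter. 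Finally, note that the paper proves the fixed-point inequality~\eqref{eq:concentrationC} first and deduces the stationary case~\eqref{eq:concentration} by shifting the functional and letting the law of $X_N$ converge to $\pi$ in total variation; running your argument directly under $\Pbb_\pi$ would again face the problem that the chain need not start in $C$.
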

As will be clear from the proof, the constant $M_0$ can be written explicitly
in terms of simple numerical properties of the Markov chain, more precisely
of its coupling time and of the return time to the small set $C$. We shall in
fact prove~\eqref{eq:concentrationC}, and show how it
implies~\eqref{eq:concentration} (see the first step of the proof of
Theorem~\ref{thm:weak}).

Note that there is no strong aperiodicity assumption in our theorems (i.e.,
we are not requiring $m=1$), contrary to several mixing results for Markov
chains. The reason for this is that we will use the splitting method of
Nummelin (see Definition~\ref{def:split} below) only to control coupling
times, but we will not need the independence of the blocks between two
successive entrance times to the atom of the split chain as
in~\cite{adamczak_markov}. Following the classical strategy of McDiarmid, we
will rather decompose $K$ as a sum of martingale increments, and estimate
each of them. However, if we try to use the natural filtration given by the
time, we have no control on what happens away from $C$. The main unusual idea
in our argument is to use another filtration indexed by the next return to
$C$, the rest is mainly routine.

\bigskip

The following remarks show that the above theorem is sharp: it is not
possible to weaken the boundedness assumption~\eqref{eq:separ_bounded}, nor
the assumption of geometric ergodicity.

\begin{rmk}
It is often desirable to have estimates for functions which are unbounded. A
typical example in geometrically ergodic Markov chains is the following.
Consider an appropriate drift function, i.e., a function $V \geq 1$ which is
bounded on a small set $C$ and satisfies $PV(x) \leq \rho V(x)+A \bbun_C(x)$
for some numbers $\rho<1$ and $A\geq 0$ (where $P$ is the Markov operator of
the chain). One thinks of $V$ as being ``large close to infinity''. A natural
candidate for stronger concentration inequalities would be functions $K$
satisfying
\begin{multline}
\label{eq:V-bounded}
  \abs{K(x_0,\dotsc,x_{i-1},x_i,x_{i+1},\dotsc, x_{n-1})- K(x_0,\dotsc,x_{i-1},x'_i,x_{i+1},\dotsc, x_{n-1})}
  \\
  \leq L_i f (V(x_i) \vee V(x'_i)),
\end{multline}
for some positive function $f$ going to infinity at infinity, for instance
$f(t) = \log (1+t)$. Unfortunately, subgaussian concentration inequalities do
\emph{not} hold for such functionals  of geometrically ergodic Markov chains:
there exists a geometrically ergodic Markov chain such that, for any $M_0$,
for any function $f$ going to infinity, there exist $n$ and a functional $K$
satisfying~\eqref{eq:V-bounded} for which the
inequality~\eqref{eq:concentration} is violated. Even more, concentration
inequalities fail for additive functionals.

Consider for instance the chain on $\{1,2,\dotsc\}$ given by $\Pbb(1 \to s) =
2^{-s}$ for $s\geq 1$ and $\Pbb(s \to s-1)=1$ for $s>1$. The function $V(s) =
2^{s/2}$ satisfies the drift condition, for the small set $C=\{1\}$, since
$PV(s) = 2^{-1/2} V(s)$ for $s>1$ and $PV(1) = 2^{-1/2}/(1-2^{-1/2})<\infty$.
The stationary measure $\pi$ is given by $\pi(s)=2^{-s}$. In particular, $V$
is integrable.

Assume by contradiction that a concentration
inequality~\eqref{eq:concentration} holds for all functionals satisfying the
bound~\eqref{eq:V-bounded}, for some function $f$ going to infinity and some
$M_0>0$. Let $\tilde f$ be a nondecreasing function with $\tilde f(x) \leq
\min (f(x), x)$, tending to infinity at infinity. Define a function $g(s) =
\tilde f(V(s))$, except for $s=1$ where $g(1)$ is chosen so that $\int
g\dd\pi=0$. Let $K(x_0,\dotsc, x_{n-1})=\sum g(x_i)$, it
satisfies~\eqref{eq:V-bounded} with $L_i=L$ constant and $\E_\pi K=0$.

For any $N>0$ and $n>0$, the Markov chain has a probability $2^{-n-N}$ to
start from $X_0=n+N$, and then the next $n$ iterates are $n+N-i \geq N$. In
this case, $g(X_0)+\dotsb + g(X_{n-1}) \geq n g(N)$.
Applying~\eqref{eq:concentration}, we get
\begin{equation*}
  2^{-n-N} = \pi(n+N) \leq \Pbb_\pi( \abs{K-\E_\pi K} \geq n g(N)) \leq
  2 e^{-M_0^{-1}  (n g(N))^2 / (nL^2)} = 2 e^{-M_0^{-1} L^{-2} g(N)^2 n}.
\end{equation*}
Letting $n$ tend to infinity, we deduce that $M_0^{-1}L^{-2} g(N)^2 \leq \log
2$. This is a contradiction if $N$ is large enough, since $g$ tends to
infinity.

For instance, if one takes $f(t)=\sqrt{\ln(t\vee e)}$, then $g$ satisfies the
subgaussian condition $\E_\pi(\exp(g(X_0)^2)) < \infty$, but nevertheless the
subgaussian inequality for the additive functional $g(X_0)+\dotsb +
g(X_{n-1})$ fails.
\end{rmk}

\begin{rmk}
\label{rmk:characterize}
One may wonder if the subgaussian concentration
inequality~\eqref{eq:concentration} can be proved in larger classes of Markov
chains. This is not the case:~\eqref{eq:concentration} \emph{characterizes}
geometrically ergodic Markov chains, as we now explain.

Consider an irreducible aperiodic Markov chain such
that~\eqref{eq:concentration} holds for any separately bounded functional. We
want to prove that it is geometrically ergodic. By~\cite[Theorem
5.2.2]{meyn_tweedie}, there exists a small set, i.e., a set $C$
satisfying~\eqref{eq:small}, for some $m\geq 1$. If the original chain
satisfies subgaussian concentration inequalities, then the chain at times
which are multiples of $m$ (called its $m$-skeleton) also does. Moreover, an
irreducible aperiodic Markov chain is geometrically ergodic if and only if
its $m$-skeleton is, by~\cite[Theorem 15.3.6]{meyn_tweedie}. It follows that
is suffices to prove the characterization when $m=1$, which we assume from
now on.

The proof uses the \emph{split chain} of Nummelin (see~\cite{nummelin}
and~\cite{nummelin2}), which we describe now.
\begin{definition}\label{def:split}
Let $P$ be  a transition kernel satisfying~\eqref{eq:small} for $\delta \in
(0,1)$ and $\nu$ a probability measure.  The split chain is a Markov chain
$Y_n$ on $\bar{\boS}=\boS \times [0,1]$, whose transition kernel $\bar P$ is
as follows: if $x\notin C$, then $\bar P((x,t),\cdot) = P(x,\cdot)\otimes
\lambda$, where $\lambda$ is the uniform measure on $[0,1]$. If $x \in C$,
then if $t\in [0,\delta]$ one sets $\bar P((x,t),\cdot)=\nu\otimes \lambda$,
and if $t\in (\delta,1]$ then $\bar P((x,t),\cdot) = (1-\delta)^{-1}(\bar
P(x,\cdot)-\delta \nu)\otimes \lambda$.
\end{definition}
Essentially, the corresponding chain behaves as the chain on $\boS$, except
when it enters $C$ where the part of the transition kernel corresponding to
$\delta\nu$ is explicitly separated from the rest.

For $x\in \boS$, let $\Pbb_{\bar x}$ denote the distribution of the Markov
chain $Y_n$ started from $\delta_x\otimes \lambda$. The first component of
$Y_n$, living on $\boS$, is then distributed as the original Markov chain
started from $x$. In the same way, the chain $Y_n$ started from
$\bar\pi=\pi\otimes \lambda$ has a first projection which is distributed as
the original Markov chain started from $\pi$. For obvious reasons, we still denote by $X_n$ the first component of $Y_n$.

Let $\bar C= C\times [0,\delta]$. This is an atom of the chain $Y_n$, i.e.,
$\bar P(y, \cdot)$ does not depend on $y\in C$. We will show that the return
time $\tau_{\bar C}$ to $\bar C$ has an exponential moment. Let $C'=C\times
[0,1]$, and let $U_n$ be the second component of $Y_n$. Each time the chain
$X_n$ enters $C$, i.e., $Y_n$ enters $C'$, then $Y_n$ enters $\bar C$ if and
only if $U_n\leq \delta$. Denote by $t_k$ the $k$-th visit to $C'$ of the
chain $Y_n$, and note that $(t_k)$ is an increasing sequence of stopping
times. By the strong Markov property, it follows that $(U_{t_k})$ is an
i.i.d.\ sequence of random variables with common distribution $\lambda$. Let
$K(X_1,\dotsc,X_n)=\sum_{i=1}^n \bbun_C(X_i)$ denote the number of visits of
$X_i$ to $C$. For any $k\leq n$, $\{ K(X_1,\dotsc,X_n)\geq k \}=\{t_k \leq
n\}$.  It follows that, for any $k\leq n$,
\begin{align*}
  \Pbb_{\bar \pi}( \tau_{\bar C}>n) &\leq \Pbb_{\pi}(K(X_1,\dotsc, X_n) < k)+\Pbb_{\bar \pi}(t_k \leq n, \tau_{\bar C}>n) \\
  &\leq \Pbb_{\pi}(K(X_1,\dotsc, X_n) < k)+\Pbb_{\bar \pi}(t_k \leq n,
  U_{t_1}>\delta, \ldots , U_{t_k}>\delta)\\
&\leq \Pbb_{\pi}(K(X_1,\dotsc, X_n) < k)+(1-\delta)^k \,  .
\end{align*}
Take $k=\epsilon n$ for $\epsilon=\pi(C)/2 < \pi(C)$. The subgaussian
concentration inequality~\eqref{eq:concentration} applied to $K$ gives, for
some $c>0$, the inequality $\Pbb_{\pi}(K(X_1,\dotsc, X_n) \leq \epsilon n)
\leq 2 e^{-cn}$. We deduce that $\tau_{\bar C}$ has an exponential moment, as
desired, first for $\bar \pi$, then for its restriction to $\bar C$ since
$\bar\pi(\bar C)>0$, and then for any point in $\bar C$ since it is an atom (i.e., all starting points in $\bar C$
give rise to a chain with the same
distribution after time $1$). Hence, for some $\kappa>1$,
\begin{equation*}
  \sup_{y\in \bar C} \E_y (\kappa^{\tau_{\bar C}}) < \infty.
\end{equation*}

By definition, this shows that the extended chain $Y_n$ is geometrically
ergodic in the sense of Definition 1. It is then easy to deduce that $X_n$
also is, as follows. By~\eqref{eq:GE}, there exists a measurable function
$\bar V$ which is finite $\pi$-almost everywhere such that
\[
\norm{\bar P^n(y,\cdot)-\bar\pi} \leq \bar V(y) \rho^{n}
\]
for $\rho \in (0,1)$ and all $y$. We may take $\bar V(y) = \sup_{n\geq 1}
\rho^{-n} \norm{\bar P^n(y,\cdot)-\bar\pi}$. For $x\not \in C$, this function
$\bar V$ is constant on $\{x\}\times [0,1]$ since the chains $Y_n$ starting
from $(x,t)$ or $(x,t')$ have the same distribution after time $1$. In the
same way, for $x\in C$, the function $\bar V$ is constant on $\{x\}\times
[0,\delta]$ and on $\{x\}\times (\delta,1]$. In particular, $\bar V$ is
bounded, hence integrable, on $\pi$-almost every fiber $\{x\}\times [0,1]$.
Letting $V(x)=\int \bar V(x,t)\dd t$, we get $\norm{(\delta_x \otimes
\lambda) \bar P^n - \bar \pi} \leq V(x) \rho^n$ (we use the standard
notation: for any measure $\nu$ on $\bar{\boS}$, the measure $\nu \bar P^n$
on $\bar{\boS}$ is defined by $\nu \bar P^n(A)= \int \bar P^n(y,A) \nu(dy)$).
Since the first marginal of the chain $Y_n$ started from $\delta_x \otimes
\lambda$ is $X_n$ started from $x$, this yields $\norm{P^n(x,\cdot)-\pi} \leq
V(x)\rho^n$, where $V$ is finite $\pi$-almost everywhere. As we already
mentioned, this implies that the chain is geometrically ergodic in the sense
of Definition 1, by Theorem 15.4.2 in~\cite{meyn_tweedie}.
\end{rmk}

\bigskip

For the proof of Theorem~\ref{thm:weak}, we will use the following coupling
lemma. It says that the chains starting from any point in $C$ or from the
stationary distribution can be coupled in such a way that the coupling time
has an exponential moment.

Let us first be more precise about what we call a coupling time. In general,
a \emph{coupling} between two random variables $U$ and $V$ is a way to
realize these two random variables on a common probability space, usually to
assert some closeness property between them. Formally, it is a probability
space $\Omega^*$ together with two random variables $U^*$ and $V^*$ on
$\Omega^*$, distributed respectively like $U$ and $V$. Abusing notations, we
will usually implicitly identify $U$ and $U^*$, and $V$ and $V^*$.

Let $\mu$ and $\tilde\mu$ be two initial distributions on $\boS$. They give
rise to two chains $X_n$ and $\tilde X_n$. We will construct couplings
$(X_n^*)$ and $(\tilde X_n^*)$ between these two chains with the following
additional property: there exists a random variable $\tau:\Omega^* \to \N$,
the \emph{coupling time}, such that $X_n^* = \tilde X_n^*$ for all $n\geq
\tau$.

\begin{lem}
\label{lem:coupling} Consider an irreducible aperiodic geometrically ergodic
Markov chain and a small set $C$ as in Definition~\ref{defn:geom_erg}. There
exist constants $M_1>0$ and $\kappa>1$ with the following property. Fix $x\in
C$. Consider the Markov chains $X_n$ and $X'_n$ starting respectively from
$x$, and from the stationary measure $\pi$. Then there exists a coupling
between them with a coupling time $\tau$ such that
\begin{equation*}
\E\paren*{\kappa^\tau} \leq M_1.
\end{equation*}
\end{lem}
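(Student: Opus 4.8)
The plan is to use the classical Nummelin splitting together with the exponential return time to the small set, and to build the coupling "through the atom". First I would pass to the split chain $Y_n$ on $\bar\boS=\boS\times[0,1]$ of Definition~\ref{def:split}, so that $\bar C=C\times[0,\delta]$ becomes an atom. Since the chain is geometrically ergodic, by~\eqref{eq:return} (applied after the reduction to the $m$-skeleton, or directly in the split chain as in Remark~\ref{rmk:characterize}) the return time $\tau_{\bar C}$ to this atom has an exponential moment, uniformly over starting points in $\bar C$, and also starting from $\bar\pi=\pi\otimes\lambda$ (to get the latter one splits the first excursion at the first visit to $C'=C\times[0,1]$ and uses~\eqref{eq:return}: from any point of $C$ one reaches $C$, hence $C'$, with exponential tails, and from $\pi$ one reaches $C$ in finite time with exponential tails by positive recurrence plus~\eqref{eq:GE}). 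Lifting a coupling of the split chains down to the first coordinate gives a coupling of the original chains with the same coupling time, so it suffices to couple $Y_n$ started from $\bar x=\delta_x\otimes\lambda$ with $x\in C$, and $Y_n$ started from $\bar\pi$.

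Next I would run the two split chains independently until they simultaneously sit in the atom $\bar C$, and then glue them: once both are in $\bar C$, which is an atom, the future of both chains has exactly the same distribution, so from that moment on I feed them the same innovations and they coincide forever. Thus the coupling time is $\tau=$ the first time $n$ at which $Y_n\in\bar C$ and $Y'_n\in\bar C$ simultaneously. It remains to show $\E(\kappa^\tau)\leq M_1$ for some $\kappa>1$ (a possibly smaller $\kappa$ than the one in~\eqref{eq:return}).

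The estimate on $\tau$ is the technical heart. Consider the successive entrance times of the first chain to the atom, $0\leq S_1<S_2<\dotsb$, with $S_{k+1}-S_k$ i.i.d.\ (after $S_1$) with an exponential moment by~\eqref{eq:return}; likewise $S'_k$ for the second chain. At each time $S_k$ the first chain is in $\bar C$; the second chain, by geometric ergodicity~\eqref{eq:GE} and its own positive recurrence, is in $\bar C$ at time $S_k$ with probability bounded below by some $p>0$, uniformly (using that $\bar\pi(\bar C)=\delta\pi(C)>0$ and that the law of $Y'_{S_k}$ is close to $\bar\pi$ once $S_k$ is not too small — here one uses that $\bar\pi(\bar C)>0$ together with the minorization/mixing to get a uniform lower bound on $\Pbb(Y'_{S_k}\in\bar C)$ for all $k\geq k_0$). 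A cleaner variant, which I would actually carry out, is the standard one: at each visit of the first chain to the atom, attempt to couple by forcing a visit of the second chain to $C$ within a bounded window and then matching the Bernoulli(\delta) split; each attempt succeeds with probability bounded below, and the time between attempts has an exponential moment. An independence-of-geometric-sums argument then shows that the number of attempts before success has a geometric tail and the total time $\tau$, being a geometric sum of random variables each with an exponential moment, itself has an exponential moment; this is exactly the kind of computation summarized in~\cite{meyn_tweedie,nummelin2}. I expect the main obstacle to be precisely this last point: showing that a \emph{geometrically distributed sum} of i.i.d.\ blocks, each carrying an exponential moment $\E(\kappa_0^{B})<\infty$, still has an exponential moment — this forces one to shrink $\kappa_0$ to some $\kappa\in(1,\kappa_0)$ with $\kappa$ depending on the success probability $p$, via the identity $\E(\kappa^\tau)=\sum_k p(1-p)^{k-1}(\E\kappa^{B})^k$, convergent iff $(1-p)\E(\kappa^{B})<1$, which holds for $\kappa$ close enough to $1$ since $\E(\kappa^{B})\to 1$ as $\kappa\to 1$. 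Assembling these pieces yields $\E(\kappa^\tau)\leq M_1$ with $M_1$ and $\kappa$ depending only on $C$, $\delta$, $m$ and the constant in~\eqref{eq:return}, as claimed.
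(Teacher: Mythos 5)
Your overall route is the same as the paper's: pass to the Nummelin split chain, use the atom $\bar C=C\times[0,\delta]$, run the two chains independently, glue them at the first time they sit in the atom simultaneously, and reduce the lemma to showing that this simultaneous hitting time has an exponential moment. The genuine gap is exactly at that last step, and you have misidentified where the difficulty lies. What is needed is a renewal-coupling estimate: for two independent renewal processes whose inter-renewal times are i.i.d.\ with an exponential moment and an \emph{aperiodic} distribution, and whose initial delays have exponential moments, the first common renewal epoch has an exponential moment. Your geometric-trials sketch does not prove this. It is true that, by stationarity and independence, $\Pbb(Y'_{S_k}\in\bar C)=\bar\pi(\bar C)>0$ for each fixed $k$; but a geometric-trials argument needs the conditional probability of success at the $k$-th attempt, \emph{given failure of the first $k-1$ attempts}, to be bounded below, and this cannot hold without using aperiodicity in an essential way: if the return-time law to the atom were concentrated on $2\N$, the two renewal sequences would have incompatible parities with probability bounded away from zero and would never meet, even though each marginal probability equals $\bar\pi(\bar C)$. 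Your ``cleaner variant'' (forcing the second chain into $C$ within a bounded window and matching the Bernoulli($\delta$) draw) has a further problem: visits of the two chains to the atom at \emph{different} times do not allow gluing, since the coupling requires both chains in the atom at the same time index, and from the (non-stationary, conditioned-on-failure) position of the second chain a merely geometrically ergodic chain admits no uniform bounded window for hitting $C$. So the real obstacle is not the geometric-sum computation you single out (that part is indeed easy), but the simultaneity estimate itself; the paper resolves it by invoking Lindvall's renewal coupling result \cite[Page 66]{lindvall}, which is precisely where the aperiodicity of the return-time distribution $p$ is used.

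Two smaller points are also glossed over. First, the exponential moment of the hitting time of $\bar C$ starting from $\bar\pi$ is asserted ``by positive recurrence plus \eqref{eq:GE}''; this is true but not immediate, and the paper imports it from Theorem 2.5(i) of \cite{nummelin_tuominen}. Second, when $m>1$ you mention the $m$-skeleton only in passing: the split-chain construction of Definition~\ref{def:split} applies to the skeleton, and a coupling of the skeletons only yields $X_{nm}=X'_{nm}$ for $nm\geq\tau$; upgrading this to a coupling of the full chains agreeing at \emph{all} times $\geq\tau$ requires the strong Markov property at $\tau$ together with the transitivity-of-couplings lemma of \cite{berkes_philipp}, as carried out in the paper.
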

While this lemma has a very classical flavor, we have not been able to locate
a precise reference in the literature. We stress that the constants $\kappa$
and $M_1$ are uniform, i.e., they do not depend on $x\in C$.
\begin{proof}
We will first give the proof when the chain is strongly aperiodic, i.e., $m$
in Definition~\ref{defn:geom_erg} is equal to $1$. Then, we will deduce the
general case from the strongly aperiodic one.
%
%We will use the $m$-skeleton of the original Markov chain, and do our
%coupling at times which are multiples of $m$. By~\cite[Theorem
%15.3.6]{meyn_tweedie}, this $m$-skeleton is still geometrically ergodic, and
%the return times to $C$ have a uniformly bounded exponential moment. Hence,
%we may without loss of generality assume that $m=1$.

Assume $m=1$. We use the split chain $Y_n$ on $\bar{\boS}=\boS\times [0,1]$
introduced in Definition~\ref{def:split}. We will use the notations of
Remark~\ref{rmk:characterize}, in particular $\bar C=C\times [0,\delta]$ and
$\bar\pi=\pi\otimes \lambda$ is the stationary distribution for $Y_n$. Every
time the Markov chain $X_n$ on $\boS$ returns to $C$, there is by definition
a probability $\delta$ that the lifted chain $Y_n$ enters $\bar C$. Hence, it
follows from~\eqref{eq:return} that, for some $\kappa_1>1$,
\begin{equation}
\label{eq:unif_Cbar}
  \sup_{(x,s)\in C \times [0,1]} \E_{(x,s)}(\kappa_1^{\tau_{\bar C}}) <\infty.
\end{equation}
In the same way, the entrance time to $C$ starting from $\pi$ has an
exponential moment, by Theorem 2.5 (i) in~\cite{nummelin_tuominen}.  It
follows that, for some $\kappa_2>1$,
\begin{equation}
\label{eq:unif_pi}
  \E_{\bar \pi} (\kappa_2^{\tau_{\bar C}}) < \infty.
\end{equation}

Define $T_0= \inf \{ n >0 : Y_n \in \bar C \}$ and the return times
\begin{equation*}
 T_0 +  \dotsb + T_{i+1}=
 \inf \{ n > T_0 +  \dotsb + T_{i} \st Y_n \in \bar C \} .
\end{equation*}
Then $T_0$ is independent of $(T_i)_{i>0}$ and $T_1, T_2, \dotsc$ are i.i.d.
Denote by $\Pbb_{\bar\pi}$ the probability measure on the underlying space
starting from the invariant distribution $\bar\pi$, and by $\Pbb_{\bar x}$
the probability measure starting from $\delta_x \otimes \lambda$ for $x\in
\boS$: the corresponding Markov chains lift the Markov chains on $\boS$
starting from $\pi$ and $x$ respectively. We infer from~\eqref{eq:unif_Cbar}
and~\eqref{eq:unif_pi} that there exist $\kappa_3>1$ and $M<\infty$ such that
\begin{equation}\label{eq:condlin}
\sup_{x\in C} \E_{\bar x}(\kappa_3^{T_0}) \leq M, \quad \E_{\bar\pi}( \kappa_3^{T_0}) \leq M \quad \text{and} \quad
\E( \kappa_3^{T_1})\leq M.
\end{equation}

Let now $Y_n$ and $Y'_n$ be the Markov chains on $\bar{\boS}$ where $Y_0 \sim
\delta_x \otimes \lambda$ with $x\in C$, and  $Y'_0 \sim \bar\pi$. It follows
from~\eqref{eq:condlin} that their respective return times $T_0+\dotsb+T_i$
and $T'_0+\dotsb+T'_i$ to $\bar C$ are such that:
\begin{itemize}
\item Both $T_0$ and $T'_0$ have a uniformly bounded exponential moment,
    i.e., $\E(\kappa_3^{T_0}) \leq M$ and $\E(\kappa_3^{T'_0}) \leq M$.
\item The times $T_i$ and $T'_i$ for $i\geq 1$ are all independent,
    identically distributed, and their common distribution $p$ is aperiodic
    with an exponential moment.
\end{itemize}
Define $\tau$ as
\begin{equation*}
  \tau=\inf\{n\geq 0 \st \exists i \text{ with }n=T_0+\dotsb+T_i\text{ and }\exists j\text{ with }n=T'_0+\dotsb +T'_j\}+1.
\end{equation*}
Lindvall \cite[Page 66]{lindvall} proves that, under the above two assumptions, $\tau$
has an exponential moment: there exist $\kappa<1$, $M_1<\infty$, depending
only on $\kappa_3$, $M$ and $p$, such that $\E(\kappa^{\tau}) \leq M$.

Let $Y_n^* = Y_n$ if $n<\tau$ and $Y_n^* = Y'_n$ if $n\geq \tau$. As both
$Y_{\tau-1}$ and $Y'_{\tau-1}$ both belong to the atom $\bar C$ by definition
of $\tau$, the strong Markov property shows that $(Y_n^*)_{n\in \N}$ is
distributed as $(Y_n)_{n\in \N}$. Hence, we have constructed a coupling
between $Y_n$ and $Y'_n$, with a coupling time $\tau$ which has an
exponential moment, uniformly in $x$. Considering their first marginals, this
yields the desired coupling between $X_n$ (the Markov chain on $\boS$ started
from $x$) and $X'_n$ (the Markov chain on $\boS$ started from $\pi$). This
concludes the proof when $m=1$.

Assume now $m>1$. In this case, one uses the $m$-skeleton of the original
Markov chain, i.e., the Markov chain at times in $m\N$. By~\cite[Theorem
15.3.6]{meyn_tweedie}, this $m$-skeleton is still geometrically ergodic, and
the return times to $C$ have a uniformly bounded exponential moment. Hence,
the result with $m=1$ yields a coupling between the chains $(X_{mn})_{n\in
\N}$ and $(X'_{mn})_{n\in \N}$ started respectively from $x\in C$ and from
$\pi$, with a coupling time $\tau$ having a uniformly bounded exponential
moment. Thus, we deduce a coupling between $(X_n)_{n\in \N}$ and
$(X'_n)_{n\in \N}$ together with a random variable $\tau$ taking values in
$m\N$, such that $X_{nm}= X'_{nm}$ for all $nm \in [\tau,+\infty) \cap m\N$
(from the technical point of view, this follows by seeing the fact that
$(X_{nm})$ is a subsequence of $(X_n)$ as a coupling between these two
sequences, and then using the transitivity of couplings given by Lemma A.1 of
\cite{berkes_philipp}). This is not yet the desired coupling since there is
no guarantee that $X_i = X'_i$ for $i\geq \tau$, $i\notin m\N$. Let $X_i^*
=X_i$ for $i<\tau$, and $X_i^* = X'_i$ for $i>\tau$. It is distributed as
$(X_n)$ by the strong Markov property since $X_\tau=X'_\tau$, and satisfies
$X_i^* = X'_i$ for all $i\geq \tau$ as desired.
\end{proof}

The following lemma readily follows.

\begin{lem}
\label{lem_coupling} Under the assumptions of Lemma~\ref{lem:coupling}, let
$K(x_0,\dotsc)$ be a function of finitely or infinitely many variables,
satisfying the boundedness condition~\eqref{eq:separ_bounded} for some
constants $L_i$. Then, for all $x\in C$,
\begin{equation*}
  \abs*{\E_x( K(X_0,X_1,\dotsc)) - \E_\pi(K(X_0,X_1,\dotsc))} \leq M_1 \sum_{i\geq 0}L_i \rho^i,
\end{equation*}
where $M_1>0$ and $\rho<1$ do not depend on $x$ or $K$.
\end{lem}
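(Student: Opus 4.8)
The plan is to read off the estimate directly from the coupling produced in Lemma~\ref{lem:coupling}. Fix $x\in C$. First I realize the chain $(X_n)$ started from $x$ and the chain $(X'_n)$ started from $\pi$ on one probability space, together with a coupling time $\tau$ such that $X_n=X'_n$ for all $n\geq\tau$ and $\E(\kappa^\tau)\leq M_1$ for some $\kappa>1$, and I set $\rho=1/\kappa\in(0,1)$. If $K$ depends only on finitely many variables $x_0,\dotsc,x_{n-1}$, I extend the family of constants by $L_i=0$ for $i\geq n$, which is legitimate since such a variable does not enter $K$; and I may assume $\sum_{i\geq0}L_i\rho^i<\infty$, as otherwise the inequality is vacuous. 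On the coupled space, $\E_x(K(X_0,X_1,\dotsc))$ and $\E_\pi(K(X_0,X_1,\dotsc))$ are the expectations of $K(X_0,X_1,\dotsc)$ and $K(X'_0,X'_1,\dotsc)$, so the left-hand side of the claim is at most $\E\abs{K(X_0,X_1,\dotsc)-K(X'_0,X'_1,\dotsc)}$.

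The main step is a pointwise bound on that coupled difference. Since $\E(\kappa^\tau)<\infty$, the coupling time is a.s.\ finite. On $\{\tau=k\}$ the sequences $(X_0,X_1,\dotsc)$ and $(X'_0,X'_1,\dotsc)$ agree in every coordinate of index $\geq k$, so I pass from one to the other by replacing the coordinates of index $0,1,\dotsc,k-1$ one at a time, invoking the separate boundedness~\eqref{eq:separ_bounded} at each of these $k$ steps; this gives, on $\{\tau=k\}$,
\begin{equation*}
  \abs{K(X_0,X_1,\dotsc)-K(X'_0,X'_1,\dotsc)}\leq\sum_{i=0}^{k-1}L_i=\sum_{i\geq0}L_i\bbun_{i<\tau}.
\end{equation*}
Taking expectations, applying Tonelli's theorem, and using Markov's inequality $\Pbb(\tau>i)\leq\Pbb(\kappa^\tau\geq\kappa^i)\leq\kappa^{-i}\E(\kappa^\tau)\leq M_1\rho^i$, I obtain
\begin{equation*}
  \abs{\E_x(K(X_0,X_1,\dotsc))-\E_\pi(K(X_0,X_1,\dotsc))}\leq\sum_{i\geq0}L_i\Pbb(\tau>i)\leq M_1\sum_{i\geq0}L_i\rho^i,
\end{equation*}
which is the assertion; here $M_1$ and $\rho$ are the constants attached to $C$ by Lemma~\ref{lem:coupling} and do not depend on $x$ or $K$.

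I do not expect a genuine obstacle---this is why the lemma ``readily follows''---but three small points deserve care. The interpolation in the pointwise bound must change one coordinate at a time so that~\eqref{eq:separ_bounded} applies at each step, and it relies on $\tau<\infty$ a.s.\ so that the interpolation terminates. When $K$ has infinitely many variables one should note that $K(X_0,X_1,\dotsc)-K(X'_0,X'_1,\dotsc)$ is a legitimate measurable, integrable random variable, which is automatic once the right-hand side $M_1\sum_iL_i\rho^i$ is finite. Finally, the argument uses nothing beyond the uniform exponential moment of the coupling time, i.e.\ exactly the content of Lemma~\ref{lem:coupling}.
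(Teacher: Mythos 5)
Your proof is correct and follows essentially the same route as the paper: couple the chain started at $x$ with the stationary chain via Lemma~\ref{lem:coupling}, bound the coupled difference coordinate-by-coordinate by $\sum_{i<\tau} L_i$, and conclude with $\Pbb(\tau>i)\leq \kappa^{-i}\E(\kappa^\tau)$. The extra care you take (termination of the interpolation, measurability, Tonelli) only makes explicit what the paper leaves implicit.
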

\begin{proof}
Consider the coupling given by the previous lemma, between the Markov chain
$X_n$ started from $x$ and the Markov chain $X'_n$ started from the
stationary distribution $\pi$. Replacing successively $X'_i$ with $X_i$ for
$i<\tau$, we get
\begin{equation*}
  \abs{K(X_0,X_1,\dotsc)-K(X'_0,X'_1,\dotsc)}
  \leq \sum_{i<\tau} L_i.
\end{equation*}
Taking the expectation, we obtain
\begin{align*}
  \abs*{\E( K(X_0,X_1,\dotsc)) - \E(K(X'_0,X'_1,\dotsc))} & \leq
    \E\paren*{\sum_{i<\tau} L_i}
  =\sum_i L_i \Pbb(\tau>i)
  \\&
  \leq \sum_i L_i \kappa^{-i} \E(\kappa^\tau)
  \leq M_1 \sum L_i \kappa^{-i}.
  \qedhere
\end{align*}
\end{proof}

\bigskip

We start the proof of Theorem~\ref{thm:weak}. To simplify the notations,
consider $K$ as a function of infinitely many variables, with $L_i=0$ for
$i\geq n$. We start with several simple reductions in the first steps, before
giving the real argument in Step 5.

\medskip

\emph{First step: It suffices to prove~\eqref{eq:concentrationC}, i.e., the
concentration estimate starting from a point $x_0\in C$.}

Indeed, fix some large $N>0$, and consider the function
\[
K_N(x_0,\dotsc,
x_{n+N-1}) = K(x_N \breakdots x_{N+n-1}).
\]
It satisfies $L_i(K_N)=0$ for $i<N$ and $L_i(K_N)=L_{i-N}(K)$ for $N\leq i<
n+N$. In particular, $\sum L_i(K_N)^2=\sum L_i(K)^2$. Applying the
inequality~\eqref{eq:concentrationC} to $K_N$, we get
\begin{equation}\label{eq:x01}
  \Pbb_{x_0}(\abs{K(X_N,\dotsc, X_{N+n-1}) - \E_{x_0} K(X_N,\dotsc, X_{N+n-1})}>t)
  \leq 2 e^{-M_0^{-1} t^2/\sum_{i\geq 0} L_i^2}.
\end{equation}
 Let
$$
g_n(x)={\mathbb E}(K(X_0,\dotsc, X_{n-1})|X_0=x)={\mathbb E}(K(X_N,\dotsc, X_{N+n-1})|X_N=x) \, .
$$
When $N\to \infty$, the distribution of $X_N$ converges towards $\pi$ in
total variation, by~\eqref{eq:GE}.
% In particular, as $K$ is bounded,
Since $g_n$ is bounded, it follows that
$$\E_{x_0} K(X_N,\dotsc, X_{N+n-1})= \E_{x_0} g_n(X_N)\to \E_\pi g_n(X_0)=
\E_\pi K(X_0,\dotsc, X_{n-1}) \quad \text{as $N \rightarrow \infty$.}
$$ 
Hence,
for
any $\epsilon>0$, their difference is bounded by $\epsilon$ if $N$ is large
enough. We obtain
\begin{align*}
  \Pbb_\pi(\abs{K(X_0,\dotsc, & X_{n-1}) -  \E_\pi K(X_0,\dotsc, X_{n-1})} > t)
  \\ &\leq \Pbb_\pi(\abs{K(X_0,\dotsc, X_{n-1}) - \E_{x_0} K(X_N,\dotsc, X_{N+n-1})} > t-\epsilon)
  \\ &\leq \epsilon + \Pbb_{x_0}(\abs{K(X_N,\dotsc, X_{N+n-1}) - \E_{x_0} K(X_N,\dotsc, X_{N+n-1})} > t-\epsilon),
\end{align*}
using again the fact that the total variation between $\pi$ and the
distribution of $X_N$ starting from $x_0$ is bounded by $\epsilon$.
Using~\eqref{eq:x01} and letting then $\epsilon$ tend to $0$, we obtain the
desired concentration estimate~\eqref{eq:concentration} starting from $\pi$,
i.e.,
\begin{equation*}
  \Pbb_\pi(\abs{K(X_0,\dotsc, X_{n-1}) - \E_\pi K(X_0,\dotsc, X_{n-1})} \geq t)
  \leq 2 e^{-M_0^{-1} t^2/\sum_{i\geq 0} L_i^2}.
\end{equation*}

\medskip

\emph{Second step: It suffices to prove that, for $x_0 \in C$,
\begin{equation}
\label{eq:ineq_exp_startC}
  \E_{x_0}(e^{K - \E_{x_0} K}) \leq e^{M_2 \sum_{i\geq 0} L_i^2},
\end{equation}
for some constant $M_2$ independent of $K$.}

Indeed, assume that this holds. Then, for any $\lambda >0$,
\begin{equation*}
  \Pbb_{x_0} ( K-\E_{x_0} K > t) \leq \E_{x_0} (e^{\lambda K -\lambda \E_{x_0} K - \lambda t})
  \leq e^{-\lambda t} e^{\lambda^2 M_2 \sum_{i\geq 0}L_i^2},
\end{equation*}
by~\eqref{eq:ineq_exp_startC}. Taking $\lambda=t/(2M_2 \sum L_i^2)$, we get a
bound $e^{-t^2/(4M_2 \sum L_i^2)}$. Applying also the same bound to $-K$, we
obtain
\begin{equation*}
  \Pbb_{x_0}( \abs{K-\E_{x_0} K}>t) \leq 2 e^{-\frac{t^2}{4M_2 \sum L_i^2}},
\end{equation*}
as desired.

\medskip

\emph{Third step: Fix some $\epsilon_0>0$. It suffices to
prove~\eqref{eq:ineq_exp_startC} assuming moreover that each $L_i$ satisfies
$L_i\leq \epsilon_0$.}

Indeed, assume that~\eqref{eq:ineq_exp_startC} is proved whenever $L_i(K)
\leq \epsilon_0$ for all $i$. Consider now a general function $K$. Take an
arbitrary point $x_* \in \boS$. Define a new function $\tilde K$ by
\begin{equation*}
  \tilde K(x_0,\dotsc, x_{n-1})=K(y_0,\dotsc, y_{n-1}),
\end{equation*}
where $y_i=x_i$ if $L_i(K)\leq \epsilon_0$, and $y_i=x_*$ if
$L_i(K)>\epsilon_0$. This new function $\tilde K$ satisfies $L_i(\tilde K) =
L_i(K) \bbun(L_i(K) \leq \epsilon_0) \leq \epsilon_0$. Therefore, it
satisfies~\eqref{eq:ineq_exp_startC}. Moreover, $\abs{K-\tilde K} \leq
\sum_{L_i(K)>\epsilon_0} L_i(K) \leq \sum L_i(K)^2/\epsilon_0$. Hence,
\begin{equation*}
  \E_{x_0}(e^{K - \E_{x_0} K}) \leq e^{2 \sum L_i(K)^2/\epsilon_0}
    \E_{x_0}(e^{\tilde K - \E_{x_0} \tilde K})
  \leq e^{2 \sum L_i(K)^2/\epsilon_0} e^{M_2 \sum L_i(\tilde K)^2}.
\end{equation*}
This is the desired inequality.

\medskip

Let us now start the proof of~\eqref{eq:ineq_exp_startC} for a function $K$
with $L_i \leq \epsilon_0$ for all $i$. We consider the Markov chain
$X_0,X_1,\dotsc$ starting from a fixed point $x_0\in C$. We define a stopping
time $\tau_i=\inf\{n\geq i \st X_n \in C\}$. Let $\boF_i$ be the
$\sigma$-field corresponding to this stopping time: an event $A$ is
$\boF_i$-measurable if, for all $n$, $A\cap \{\tau_i=n\}$ is measurable with
respect to $\sigma(X_0,\dotsc, X_n)$. Let
\begin{equation*}
  D_i=\E(K \given \boF_i) - \E(K \given \boF_{i-1}).
\end{equation*}
It is $\boF_i$-measurable. By definition of $D_i$,
\[
K(X_0,\dotsc)-\E_{x_0} (K(X_0,\dotsc))=\sum_{i=1}^n D_i  \, .
\]

\emph{Fourth step: It suffices to prove that
\begin{equation}
\label{eq:ineq_Di}
  \E(e^{D_i} \given \boF_{i-1}) \leq e^{M_3 \sum_{k\geq i} L_k^2 \rho^{k-i}},
\end{equation}
for some $M_3>0$ and some $\rho<1$, both independent of $K$.}

Indeed, assume that this inequality holds. Conditioning successively with
respect to $\boF_n$, then $\boF_{n-1}$, and so on, we get
\begin{equation*}
  \E(e^{K-\E K})=\E(e^{\sum D_i}) \leq e^{M_3 \sum_{i=0}^n \sum_{k\geq i}L_k^2 \rho^{k-i}}
  \leq e^{M_3/(1-\rho) \cdot \sum_i L_i^2}.
\end{equation*}
This is the desired inequality.

\medskip

\emph{Fifth step: Proof of~\eqref{eq:ineq_Di}.}

Note first that on the set $\{\tau_{i-1}>i-1\}$ one has $\tau_{i-1}= \tau_i$, and consequently
$D_i=0$.  Hence, the following decomposition holds:
\begin{equation}\label{eq:expressDi}\begin{split}
  D_i&= \sum_{j=i}^\infty (\E(K \given \boF_i) - \E(K \given \boF_{i-1}))\bbun_{\tau_i=j, \tau_{i-1}=i-1}
  \\
  &= \sum_{j=i}^\infty  (g_j(X_0, \ldots, X_j) -g_{i-1}(X_0, \ldots, X_{i-1}))\bbun_{\tau_i=j, \tau_{i-1}=i-1},
\end{split}\end{equation}
where
\begin{equation*}
g_j(x_0, \ldots, x_j)= \E_{X_0=x_j} K(x_0, \ldots, x_j, X_1, \ldots, X_{n-j-1}).
\end{equation*}
Here, we have used the fact that
\[
\E(K \given \boF_i)\bbun_{\tau_i=j}=\E(K
\bbun_{\tau_i=j} \given \boF_i)= \E(K  \bbun_{\tau_i=j} \given X_0, \ldots,
X_j) =\E(K  \given X_0, \ldots, X_j)\bbun_{\tau_i=j},
\]
which is commonly used in the proof of the strong Markov property for
stopping times.

Let now
\begin{equation*}
g_{j, \pi}(x_0, \ldots, x_j)= \E_{X_0\sim \pi} K(x_0, \ldots, x_j, X_1, \ldots, X_{n-j-1}) .
\end{equation*}
By Lemma~\ref{lem_coupling}, for any $x_j \in C$,
\begin{equation}\label{eq:ineqg1}
\abs{g_j(x_0, \ldots, x_j)-g_{j, \pi}(x_0, \ldots, x_j)} \leq M_1 \sum_{k\geq j+1} L_k \rho^{k-j}  .
\end{equation}
From~\eqref{eq:expressDi} and~\eqref{eq:ineqg1}, we infer that
\begin{equation}\label{eq:ineqDi2}\begin{split}
  D_i={}&\sum_{j=i}^\infty  (g_{j, \pi}(X_0, \ldots, X_j) -g_{i-1, \pi}(X_0, \ldots, X_{i-1}))\bbun_{\tau_i=j, \tau_{i-1}=i-1}
  \\&+O\paren*{ \sum_{k\geq \tau_i+1} L_k \rho^{k-\tau_i}} +O\paren*{\sum_{k\geq i} L_k \rho^{k-i}}.
\end{split}\end{equation}
Since $\pi$ is the stationary measure, $g_{j, \pi}$ can also be written as
\begin{equation*}
  g_{j, \pi}(x_0, \ldots, x_j)= \E_{X_0\sim \pi} K(x_0, \ldots, x_j, X_{j-i+2}, \ldots, X_{n-i}) .
\end{equation*}
It follows that
\begin{equation}\label{eq:ineqg2}
  \abs{g_{j, \pi}(x_0, \ldots, x_j) -g_{i-1, \pi}(x_0, \ldots, x_{i-1})} \leq \sum_{k=i}^{j} L_k     .
\end{equation}
Write $\tau=\tau_i-(i-1)$ for the return time to $C$ of $X_{i-1}$.
From~\eqref{eq:ineqg2}, we get that
\begin{equation}\label{eq:ineqg3}
\sum_{j=i}^\infty  (g_{j, \pi}(X_0, \ldots, X_j) -g_{i-1, \pi}(X_0, \ldots, X_{i-1}))\bbun_{\tau_i=j, \tau_{i-1}=i-1}
\leq \left (\sum_{k=i}^{i+\tau-1} L_k \right ) \bbun_{ \tau_{i-1}=i-1} .
\end{equation}
Since $\sum_{k\geq i} L_k \rho^{k-i} \leq \sum_{k=i}^{i+\tau-1} L_k + \sum_{k \geq i+\tau} L_k \rho^{k-i-\tau}$, it follows
from~\eqref{eq:ineqDi2} and~\eqref{eq:ineqg3} that
  \begin{equation}
  \label{eq:main_bound_Di}
  \abs{D_i} \leq M_4 \left (\sum_{k=i}^{i+\tau-1} L_k + \sum_{k\geq i+\tau} L_k \rho^{k-i-\tau}\right ) \bbun_{ \tau_{i-1}=i-1} .
  \end{equation}
As all the $L_k$ are bounded by $\epsilon_0$, we obtain
\begin{equation}
\label{eq:first_bound_Di}
  \abs{D_i} \leq M_4 \epsilon_0(\tau+1/(1-\rho))\bbun_{ \tau_{i-1}=i-1} \leq M_5\epsilon_0 \tau \bbun_{ \tau_{i-1}=i-1}.
\end{equation}
Choose $\sigma \in [\rho, 1)$. The equation~\eqref{eq:main_bound_Di} also
gives
\begin{equation*}
  \abs{D_i} \leq M_4 \left (\sum_{k\geq i} L_k \sigma^{k-i} \sigma^{-\tau} \right ) \bbun_{ \tau_{i-1}=i-1}.
  \end{equation*}
By the Cauchy-Schwarz inequality, this yields
\begin{equation}
\label{eq:second_bound_Di}
\begin{split}
  \abs{D_i}^2 & \leq M_4^2 \sigma^{-2\tau} \paren*{\sum_{k\geq i} L_k^2 \sigma^{k-i}} \paren*{ \sum_{k\geq i} \sigma^{k-i}} \bbun_{ \tau_{i-1}=i-1}
  \\ & \leq M_6
  \sigma^{-2\tau} \left (\sum_{k\geq i} L_k^2 \sigma^{k-i}\right ) \bbun_{ \tau_{i-1}=i-1}.
\end{split}
\end{equation}

We have $e^t \leq 1+t+ t^2 e^{\abs{t}}$ for all real $t$. Applying this
inequality to $D_i$, taking the conditional expectation with respect to
$\boF_{i-1}$ and using that $\E(D_i \given \boF_{i-1})=0$, this gives
  \begin{equation*}
  \E(e^{D_i} \given \boF_{i-1}) \leq 1+\E(D_i^2 e^{\abs{D_i}} \given \boF_{i-1}).
  \end{equation*}
Combining this estimate with~\eqref{eq:first_bound_Di}
and~\eqref{eq:second_bound_Di}, we get
\begin{align*}
  \E(e^{D_i} \given \boF_{i-1}) &
  \leq 1+\E\paren*{ M_6 e^{M_5 \epsilon_0 \tau}\sigma^{-2\tau} \sum_{k\geq i}L_k^2 \sigma^{k-i} \given \boF_{i-1}}
  \bbun_{ \tau_{i-1}=i-1} \\
  &
  \leq 1+ M_6 \left (\sum_{k\geq i}L_k^2 \sigma^{k-i}\right)\E\paren*{ e^{M_5 \epsilon_0 \tau}\sigma^{-2\tau}  \given X_{i-1}}
  \bbun_{ X_{i-1}\in C}.
\end{align*}
By the definition of geometric ergodicity (see Definition~\ref{defn:geom_erg}) one can
choose $\epsilon_0$ small enough and $\sigma$ close enough to $1$ in such a way that
\[
\sup_{x \in C} \E\paren*{ e^{M_5 \epsilon_0 \tau}\sigma^{-2\tau}  \given X_{i-1}=x} < \infty \, .
\]
It follows that
\begin{equation*}
  \E(e^{D_i} \given \boF_{i-1}) \leq 1+M_7 \sum_{k\geq i}L_k^2 \sigma^{k-i}
  \leq e^{M_7 \sum_{k\geq i}L_k^2 \sigma^{k-i}}.
\end{equation*}
This concludes the proof of~\eqref{eq:ineq_Di}, and of
Theorem~\ref{thm:weak}. \qed

\bibliography{biblio}
\bibliographystyle{amsalpha}
\end{document}